\documentclass[11pt]{article}
\usepackage{graphicx}
\usepackage{amsmath}
\usepackage{multirow}
\usepackage{amssymb}
\usepackage{amsthm}
\usepackage{booktabs}
\usepackage{bm}
\usepackage{diagbox}
\usepackage{natbib} 
\usepackage{moreverb}
\usepackage{geometry}
\usepackage{bbm}

\usepackage{setspace}
\def\T{{ \mathrm{\scriptscriptstyle T} }}

\def\pr{{\operatorname{pr}}}

\theoremstyle{plain}

\newtheorem{theorem}{Theorem}[section]
\newtheorem{lemma}[theorem]{Lemma}
\newtheorem{proposition}[theorem]{Proposition}
\newtheorem{corollary}[theorem]{Corollary}
\theoremstyle{definition}

\newtheorem*{remark}{Remark}

\title{Gaussian comparison above the median}
\author{Colin B. Fogarty \thanks{Department of Statistics, University of Michigan, Ann Arbor, MI.}}
\date{}
\begin{document}
\thispagestyle{empty}
\maketitle
\abstract{We prove a Gaussian comparison inequality for closed convex sets with reference probability at least 1/2.  For centered Gaussian vectors whose covariance matrices are ordered in the Loewner sense, the smaller covariance law assigns at least as much probability as the larger covariance law to every closed convex set with measure at least 1/2 under the larger covariance.  More generally, we show that convolving a nondegenerate Gaussian law with any independent centrally symmetric distribution cannot increase the probability of a closed convex set when the convolved probability is at least 1/2.  This provides a one-sided analogue of Anderson's Theorem for Gaussian measures. As a statistical application, the result justifies one-sided and order-restricted inference using conservative covariance estimators at significance levels below 1/2 for test statistics whose acceptance regions are closed and convex but not necessarily symmetric.}
\normalsize
\singlespacing
\section{Introduction}
\subsection{Anderson's Theorem and conservative two-sided inference}

In many design-based and assumption-lean statistical settings it is challenging, or even impossible, to construct a consistent estimator for the covariance of a root-$n$ consistent and asymptotically normal estimator $\hat{\theta} \in \mathbb{R}^d$. One prominent example arises in completely randomized experiments analyzed under the finite population model, where the true covariance for the treated-minus-control difference in means depends on the unobservable treatment effects \citep{ney23, li17, aba20}. Similar issues arise in randomized experiments under interference \citep{har26}; in non-measurable sampling designs where pairwise inclusion probabilities can equal zero \citep[p. 33]{aro13conservative, sar92}; in inference for misspecified regression models with fixed covariates \citep{aba14}; and more broadly when observations are independent but not identically distributed \citep{liu95}. In such settings, a common expedient is to develop a {conservative} covariance estimator: if $\Sigma_X = \text{avar}\{n^{1/2}(\hat{\theta}-\theta)\}$, one constructs an estimator $\hat{\Sigma}_Y$ converging in probability to $\Sigma_Y \succeq \Sigma_X$. 

Anderson's Theorem \citep{and55} converts conservative covariance estimation into conservative two-sided inference. Let $X = (X_1,\ldots,X_d)^\T$ and $Y = (Y_1,\ldots,Y_d)^\T$ be mean-zero Gaussian random vectors with covariance matrices $\Sigma_X$ and $\Sigma_Y$ satisfying $\Sigma_Y-\Sigma_X \succeq 0$. \citet{and55} proves that for any closed convex set $\mathcal{A}$ that is mirror symmetric about the origin (i.e., $\mathcal{A}=-\mathcal{A}$), $\pr(X\in \mathcal{A})\geq \pr(Y\in \mathcal{A}).$ Consequently, for any $f: \mathbb{R}^d\rightarrow \mathbb{R}$ that is lower semicontinuous, quasiconvex and mirror symmetric, $\pr\{f(X)\geq a\} \leq \pr\{f(Y)\geq a\}$ for all $a$, establishing that $f(Y)$ stochastically dominates $f(X)$. When $d=1$ with $X\sim \mathcal{N}(0,\sigma^2_X)$, $Y\sim \mathcal{N}(0,\sigma^2_Y)$ and $\sigma^2_Y\geq \sigma^2_X$, taking $f(w) = |w|$ yields asymptotically conservative two-sided confidence intervals $\hat{\theta} \pm \Phi^{-1}(1-\alpha/2)\hat{\sigma}_Y/n^{1/2}$, and asymptotically conservative two-sided $p$-values $2\Phi\{-n^{1/2}|\hat{\theta}-\theta_0|/\hat{\sigma}_Y\}$, where $\Phi$ and $\Phi^{-1}$ are the cumulative distribution function and quantile function of the standard normal distribution. For $d > 1$, the result provides conservative two-sided inference using roots such as $f(w) = \max_{1\leq i \leq d}|w_i|$, $f(w) = ||w||^2_2$, and $f(w) = w^\T Vw$ for any $V \succeq 0$.
\subsection{Towards conservative one-sided inference}
The restriction to mirror symmetric sets precludes the use of Anderson's theorem for one-sided or order-restricted hypothesis testing \citep{sil05}. With the assumption of mirror symmetry dropped, full stochastic dominance no longer holds. For example, in the univariate setting with $\sigma^2_Y\geq \sigma^2_X>0$,  $\pr(X\geq a) \geq \pr(Y \geq a)$ for $a < 0$, while $\pr(X\geq a) \leq \pr(Y \geq a)$ for $a \geq 0$. From this univariate example, we see that when testing the one-sided null $\theta \leq \theta_0$ with alternative $\theta > \theta_0$, conservative covariance estimation does not justify conservative inference for all significance levels $\alpha$. Nevertheless, it {does} justify conservative inference for all $\alpha \leq 1/2$. As typical values of $\alpha$ rarely exceed 0.1 in practice, the loss of full stochastic dominance and the restriction to conservative inference for $\alpha \leq 1/2$ has little bearing on the applicability of the result.

For $d > 1$, relevant functions $f$ for one-sided testing include $f(w) = \max_{1\leq i\leq d} w_i$, $f(w) = \sum_{i=1}^d\max(0, w_i)^2$, and $f(w) = ||\Pi_{\mathcal{C}}(w)||$, the norm of the projection of $w$ onto a closed convex cone $\mathcal{C}$. Anderson's Theorem cannot generally be applied to these functions as they are not necessarily mirror symmetric. Replacing the covariance condition in Anderson's Theorem with the weaker assumption that $E\{(Y_i-Y_j)^2\} \geq E\{(X_i-X_j)^2\}$ for all $i,j = 1,\ldots,d$, the Sudakov-Fernique inequality states that $E(\max_{1\leq i\leq d} X_i) \leq E(\max_{1\leq i\leq d} Y_i)$ for centered Gaussians \citep[Theorem 2.2.3]{adl07}. Further imposing $E(Y_i^2) = E(X_i^2)$ for all $i=1,\ldots,d$ in addition to the Sudakov-Fernique hypotheses, Slepian's inequality states that $\pr(\max_{1\leq i\leq d} X_i \geq a) \leq \pr(\max_{1\leq i\leq d} Y_i \geq a)$ for all $a$ \citep[Theorem 2.2.1]{sle62,adl07}. Sudakov-Fernique does not provide control of tail probabilities, and the additional assumption of equal marginal variances makes the covariance conditions of Slepian's inequality incomparable with those of Anderson's Theorem. These inequalities do not confer conservative inference under conservative covariance estimation even for the one-sided max statistic for a range of $\alpha$ relevant for statistical inference, let alone the broader class of statistics encountered in multivariate one-sided testing. 

\citet[\S\S 3.2-3.3]{ehm18} conjectured that under the Loewner ordering $\Sigma_Y\succeq \Sigma_X$ of Anderson's Theorem, $\pr\{f(X) >a\} \leq \pr\{f(Y) > a\}$ for any convex monotone function $f$ whenever $ \pr\{f(Y) > a\} \leq 1/2$. \citet[\S 9.1]{coh22} conjectured that if $Y$ has larger marginal variances than $X$ and the Sudakov-Fernique covariance conditions hold, $\pr(\max_{1\leq i\leq d} X_i \geq a) \leq \pr(\max_{1\leq i\leq d} Y_i \geq a)$ whenever $\pr(\max_{1\leq i\leq d} Y_i \geq a) \leq 1/2$.  

\section{Comparison under symmetric convolution}
We first state a result comparing the probability content of a closed convex set under a nondegenerate Gaussian measure to that of a convolution of the same nondegenerate Gaussian measure with the law of a centrally symmetric random vector. 

\begin{theorem}\label{thm:conv}
Let $X\in \mathbb{R}^d$ be a mean-zero Gaussian random vector with covariance $\Sigma_X\succ 0$, and let $W \in \mathbb{R}^d$ be a random vector independent of $X$ satisfying $W \overset{d}{=}-W$. For every closed convex set $\mathcal{K}$,
\begin{align*}
\pr(X+W \in \mathcal{K})\geq \frac{1}{2} \Rightarrow \pr(X \in \mathcal{K})\geq \pr(X+W\in \mathcal{K}).
\end{align*}
\end{theorem}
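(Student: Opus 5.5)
The plan is to reduce to a standard Gaussian, study the translate function $w\mapsto \pr(X+w\in\mathcal{K})$, and combine Ehrhard's inequality with an elementary two-point inequality for $\Phi$.

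\textbf{Step 1: reduction and the translate function.} Write $X=\Sigma_X^{1/2}Z$ with $Z\sim\mathcal{N}(0,I_d)$. Replace $\mathcal{K}$ by $\Sigma_X^{-1/2}\mathcal{K}$, which is still closed and convex, and $W$ by $\Sigma_X^{-1/2}W$, which is still symmetric and independent. This lets us assume $\Sigma_X=I_d$. Let $\gamma_d$ be the standard Gaussian measure and define
\[
\varphi(w)=\pr(X+w\in\mathcal{K})=\gamma_d(\mathcal{K}-w).
\]
By independence and Fubini, $\pr(X+W\in\mathcal{K})=E\{\varphi(W)\}$. Since $W\overset{d}{=}-W$,
\[
E\{\varphi(W)\}=E\bigl[\{\varphi(W)+\varphi(-W)\}/2\bigr].
\]
The goal is therefore to compare $\varphi(0)$ with the symmetrized average $\{\varphi(w)+\varphi(-w)\}/2$.

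\textbf{Step 2: Ehrhard concavity.} Set $g(w)=\Phi^{-1}\{\varphi(w)\}$, which takes values in $[-\infty,\infty]$. For $\lambda\in[0,1]$,
\[
\mathcal{K}-\{\lambda w+(1-\lambda)w'\}=\lambda(\mathcal{K}-w)+(1-\lambda)(\mathcal{K}-w'),
\]
because $\mathcal{K}$ is convex. Ehrhard's inequality for convex sets (equivalently Borell's extension to Borel sets) then shows that $g$ is concave. In particular $g(w)+g(-w)\le 2g(0)=:2c$ for every $w$.

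\textbf{Step 3: a two-point inequality for $\Phi$.} Fix $c$ and consider $h(a)=\Phi(a)+\Phi(2c-a)$. Its derivative is $\phi(a)-\phi(2c-a)$, where $\phi$ is the standard normal density. Because $a^2-(2c-a)^2=4c(a-c)$, the sign of this derivative is determined as follows.
\begin{itemize}
\item If $c\ge 0$, then $h$ increases for $a<c$ and decreases for $a>c$. Hence $\{\Phi(a)+\Phi(b)\}/2\le\Phi(c)$ whenever $a+b\le 2c$, using that $\Phi$ is monotone.
\item If $c<0$, then $h$ increases on $(c,\infty)$ toward $\lim_{a\to\infty}h(a)=1$. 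By symmetry in $a$ and $b$, we get $\{\Phi(a)+\Phi(b)\}/2<1/2$ whenever $a+b\le 2c<0$.
\end{itemize}

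\textbf{Step 4: conclusion.} Apply Step 3 pointwise with $a=g(w)$ and $b=g(-w)$.
\begin{itemize}
\item If $\varphi(0)<1/2$, then $c<0$, so the symmetrized average is strictly below $1/2$ for every $w$. Taking expectations gives $\pr(X+W\in\mathcal{K})<1/2$, which contradicts the hypothesis.
\item Hence $c\ge 0$. Then $\{\varphi(w)+\varphi(-w)\}/2\le\varphi(0)$ pointwise, and taking expectations gives $\pr(X+W\in\mathcal{K})\le\pr(X\in\mathcal{K})$.
\end{itemize}

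\textbf{Main obstacle.} The delicate step is the rigorous use of Ehrhard's inequality with infinite values. We need $g$ to be concave as an extended-real function. When $\varphi(w)=0$ we have $g(w)=-\infty$, and the concavity statement should be read with the convention $-\infty+x=-\infty$. The pairing in Step 3 also has to be checked when $g(w)=+\infty$ and $g(-w)=-\infty$; this is consistent with the strict inequality $1/2<1/2$ failing only in the limit. Since $c=\Phi^{-1}\{\varphi(0)\}$ is $\pm\infty$ only in trivial cases ($\mathcal{K}$ null or full), I would treat those separately. I would also check that the $c<0$ case gives strict inequality even when $g(w)=+\infty$ is allowed. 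If $g(w)=+\infty$, concavity forces $g(-w)=-\infty$, so the pair contributes exactly $1/2$. Strictness then needs $\varphi<1$ everywhere, which holds when $\mathcal{K}\neq\mathbb{R}^d$ because $X$ is nondegenerate.
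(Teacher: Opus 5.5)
Your proposal is correct and follows essentially the same route as the paper: Ehrhard--Borell concavity of $\Phi^{-1}$ composed with the translate function, the two-point inequality $\Phi(a)+\Phi(b)\le 2\Phi(c)$ for $a+b\le 2c$ with $c\ge 0$, a contradiction argument showing $\pr(X\in\mathcal{K})\ge 1/2$, and then symmetrization over $W$. The extended-real bookkeeping you flag as the main obstacle is unnecessary, and the paper sidesteps it directly: once $\mathcal{K}\neq\mathbb{R}^d$, the hypothesis together with absolute continuity of $X+W$ forces $\mathcal{K}$ to have nonempty interior, so every translate probability lies strictly in $(0,1)$ and $\Phi^{-1}$ of it is real-valued.
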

The proof is presented in \S \ref{sec:proof}. As a core ingredient, it uses the Ehrhard--Borell inequality \citep{ehr83, bor03} to establish the concavity of $H(v) = \Phi^{-1}\{\pr(X \in \mathcal{K}+v)\}$ in $v$ when $0 < \pr(X\in \mathcal{K} + v) < 1$ for all $v$. It then shows that the requirement $\pr(X+W \in \mathcal{K})\geq 1/2$ implies that $\pr(X+v\in \mathcal{K}) + \pr(X-v\in\mathcal{K})\leq 2\;\pr(X\in \mathcal{K})$ for any $v$, which yields the result.

As a corollary, we obtain a one-sided analogue of Anderson's Theorem for Gaussian measures comparing the probability content of closed convex sets under the Loewner ordering. 
\begin{corollary}\label{cor:compare}
 Let $X\in \mathbb{R}^d$  and $Y\in \mathbb{R}^d$ be mean-zero Gaussian random vectors with covariance matrices $\Sigma_X$ and $\Sigma_Y$ satisfying $\Sigma_Y-\Sigma_X \succeq 0$. Let $\mathcal{K}$ be a closed convex set. Then, \begin{align*}\pr(Y\in\mathcal{K})\geq \frac{1}{2}\Rightarrow \pr(X\in \mathcal{K}) \geq \pr(Y\in \mathcal{K}).\end{align*}
\end{corollary}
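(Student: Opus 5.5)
The plan is to deduce Corollary~\ref{cor:compare} from Theorem~\ref{thm:conv} by writing $Y \overset{d}{=} X + W$, with $W \sim \mathcal{N}(0,\Sigma_Y-\Sigma_X)$ independent of $X$. Since $\Sigma_Y - \Sigma_X \succeq 0$, such a $W$ exists. It is centered Gaussian, so $W \overset{d}{=} -W$. Independence makes the covariance of $X+W$ equal to $\Sigma_X + (\Sigma_Y-\Sigma_X) = \Sigma_Y$, and a Gaussian law is determined by its mean and covariance. Hence $\pr(Y\in\mathcal{K}) = \pr(X+W\in\mathcal{K})$, and Theorem~\ref{thm:conv} gives $\pr(X\in\mathcal{K})\geq \pr(Y\in\mathcal{K})$ whenever the latter is at least $1/2$. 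This settles the case $\Sigma_X \succ 0$ directly.

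The main obstacle is that Theorem~\ref{thm:conv} requires $\Sigma_X\succ 0$, while the corollary allows a singular $\Sigma_X$. I would handle this by perturbation. For $\epsilon>0$, set $X_\epsilon = X + \epsilon^{1/2} Z$ and $Y_\epsilon = Y + \epsilon^{1/2} Z'$, where $Z, Z'$ are standard Gaussian vectors independent of everything else. Then $\Sigma_{X_\epsilon} = \Sigma_X+\epsilon I \succ 0$ and $\Sigma_{Y_\epsilon} - \Sigma_{X_\epsilon} = \Sigma_Y-\Sigma_X\succeq 0$.

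The difficulty is passing to the limit at the boundary of $\mathcal{K}$, in two places:
\begin{enumerate}
\item The hypothesis $\pr(Y_\epsilon\in\mathcal{K})\geq 1/2$ may fail even when $\pr(Y\in\mathcal{K})\geq 1/2$.
\item $\pr(X_\epsilon\in\mathcal{K})$ need not converge to $\pr(X\in\mathcal{K})$ when $X$ charges $\partial\mathcal{K}$.
\end{enumerate}

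To deal with both, I would replace $\mathcal{K}$ by its closed $\delta$-enlargement $\mathcal{K}^\delta = \{x : \operatorname{dist}(x,\mathcal{K})\leq\delta\}$, which is closed and convex. Since $\epsilon^{1/2}Z' \to 0$ in probability and $\{Y\in\mathcal{K}\}$ implies $\{Y_\epsilon\in\mathcal{K}^\delta\}$ whenever $\epsilon^{1/2}\|Z'\|\leq \delta$, we get
\[
\liminf_{\epsilon\to 0}\pr(Y_\epsilon\in\mathcal{K}^\delta)\geq \pr(Y\in\mathcal{K}).
\]
The hypothesis still holds only with $\geq 1/2$, not strictly, so I would instead apply the theorem with $\mathcal{K}^{\delta}$ on the $Y$ side as follows.

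First I would treat the case $\pr(Y\in\mathcal{K})>1/2$. For small $\epsilon$ we then have $\pr(Y_\epsilon\in\mathcal{K}^\delta)\geq 1/2$, so Theorem~\ref{thm:conv} gives $\pr(X_\epsilon\in\mathcal{K}^\delta)\geq\pr(Y_\epsilon\in\mathcal{K}^\delta)\geq \pr(Y\in\mathcal{K}) - o(1)$. On the other side, $\{X_\epsilon\in\mathcal{K}^\delta\}$ implies $\{X\in\mathcal{K}^{2\delta}\}$ when $\epsilon^{1/2}\|Z\|\leq\delta$. Letting $\epsilon\to0$ yields $\pr(X\in\mathcal{K}^{2\delta})\geq\pr(Y\in\mathcal{K})$. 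Letting $\delta\downarrow 0$ and using closedness of $\mathcal{K}$, so that $\bigcap_\delta \mathcal{K}^{2\delta}=\mathcal{K}$, continuity from above gives $\pr(X\in\mathcal{K})\geq\pr(Y\in\mathcal{K})$.

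The boundary case $\pr(Y\in\mathcal{K})=1/2$ needs a separate strict inequality $\pr(Y_\epsilon\in\mathcal{K}^\delta)>1/2$ for small $\epsilon$. I would obtain it by noting that $\pr(Y\in\mathcal{K}^{\delta/2})\geq 1/2$ and running the same argument with $\mathcal{K}^{\delta/2}$ and $\mathcal{K}^\delta$. Alternatively, I would use that if $\pr(Y\in \operatorname{int}\mathcal{K}^\delta) > 1/2$ fails for every $\delta$, then $Y\in\mathcal{K}^\delta\setminus \mathcal{K}$ with vanishing probability, forcing a contradiction. If neither works cleanly, I would add $\gamma>0$ to the target, reading the conclusion for $\mathcal{K}^{\delta}$ with hypothesis threshold $1/2$ achieved in the limit. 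Apart from this threshold issue, the reduction is routine.
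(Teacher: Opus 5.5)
Your reduction is correct when $\Sigma_X\succ 0$. For singular $\Sigma_X$ it is also correct when $\pr(Y\in\mathcal{K})>1/2$.

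\textbf{The gap.} The boundary case $\pr(Y\in\mathcal{K})=1/2$ is left open, and it is needed (for example in Corollary~\ref{cor:median} at $a=m^-_Y$). None of your three fixes closes it.
\begin{enumerate}
\item Running the argument with $\mathcal{K}^{\delta/2}$ only gives $\liminf_{\epsilon\to 0}\pr(Y_\epsilon\in\mathcal{K}^{\delta})\geq \pr(Y\in\mathcal{K}^{\delta/2})\geq 1/2$. This does not deliver the hypothesis of Theorem~\ref{thm:conv} at any fixed $\epsilon$.
\item A slack $\gamma$ cannot be absorbed, because the comparison is not even approximately stable below $1/2$. Take $X'\sim\mathcal{N}(0,\sigma^2)$, $Y'\sim\mathcal{N}(0,1)$ and $\mathcal{K}=[a,\infty)$ with small $a>0$. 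Then $\pr(Y'\in\mathcal{K})$ is just below $1/2$, while $\pr(X'\in\mathcal{K})\to 0$ as $\sigma\to 0$. Your $X_\epsilon$ is nearly degenerate in exactly this way along the null space of $\Sigma_X$.
\item Your contradiction idea is the right one, but all its content sits in the unexplained ``contradiction''. You must show that $\pr(Y\in\mathcal{K})=1/2$ forces $\pr(Y\in\mathcal{K}^{\delta}\setminus\mathcal{K})>0$ for every $\delta>0$. This is false for general laws and needs Gaussian structure.
\end{enumerate}

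\textbf{How to prove the missing fact.} $Y$ has an everywhere positive density with respect to Lebesgue measure on $L=\mathrm{colspace}(\Sigma_Y)$. Also $L\neq\{0\}$, since otherwise $\pr(Y\in\mathcal{K})\in\{0,1\}$. Probability $1/2$ forces $C=\mathcal{K}\cap L$ to be a proper subset of $L$ with nonempty interior relative to $L$; otherwise $C$ lies in a proper affine subspace of $L$ and is null. Take a relative boundary point $p$ of $C$ with unit outward normal $n\in L$. A small ball in $L$ around $p+(\delta/2)n$ lies in $\mathcal{K}^{\delta}\setminus\mathcal{K}$. Hence $\pr(Y\in\mathcal{K}^{\delta/2})>1/2$, so $\liminf_{\epsilon\to0}\pr(Y_\epsilon\in\mathcal{K}^{\delta})>1/2$, and the rest of your argument goes through.

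\textbf{How the paper avoids the issue.} The paper never perturbs $Y$.
\begin{itemize}
\item When $\Sigma_Y\succ 0$, it interpolates on the $X$ side only. $Z_t=X+t^{1/2}W$ has covariance $(1-t)\Sigma_X+t\Sigma_Y\succ 0$ for $t\in(0,1]$. Also $Y\overset{d}{=}Z_t+W_t$ with $W_t\sim\mathcal{N}_d\{0,(1-t)(\Sigma_Y-\Sigma_X)\}$. So the hypothesis $\pr(Y\in\mathcal{K})\geq 1/2$ is used verbatim for every $t$.
\item Letting $t\downarrow 0$, the Portmanteau bound $\pr(X\in\mathcal{K})\geq\limsup_{t\downarrow 0}\pr(Z_t\in\mathcal{K})$ for closed $\mathcal{K}$ runs in the favorable direction. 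No enlargement of $\mathcal{K}$ is needed.
\item When $\Sigma_Y$ is singular, $\mathrm{colspace}(\Sigma_X)\subseteq\mathrm{colspace}(\Sigma_Y)$. This lets it restrict $X$, $Y$ and $\mathcal{K}$ to $\mathrm{colspace}(\Sigma_Y)$, where the first case applies.
\end{itemize}
The isotropic perturbation of $Y$ is what creates your threshold problem. Either adopt this one-sided interpolation or add the positivity lemma above.
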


As lower semicontinuous quasiconvex functions have closed convex sublevel sets, we additionally obtain the following result comparing tail probabilities above the median.
\begin{corollary}\label{cor:median}
Let $f: \mathbb{R}^d\rightarrow \mathbb{R}$ be lower semicontinuous and quasiconvex, and define \begin{align*} m^-_{Y} = \inf\{t : \pr\{f(Y) \leq t\}\geq 1/2\}.\end{align*} Under the conditions on $X$ and $Y$ in Corollary \ref{cor:compare}, for any $a > m^-_Y$
\begin{align*} \pr\{f(X) \geq a\} \leq \pr\{f(Y) \geq a\}.\end{align*}  For $a=m^-_Y$, $\pr\{f(X) > m^-_Y\} \leq \pr\{f(Y) > m^-_Y\}$. If additionally $\pr\{f(X)=m^-_Y\}=0$, then $\pr\{f(X) \geq m^-_Y\} \leq \pr\{f(Y) \geq m^-_Y\}$.
\end{corollary}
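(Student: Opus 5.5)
We derive Corollary \ref{cor:median} from Corollary \ref{cor:compare} by applying it to sublevel sets of $f$. Since $f$ is lower semicontinuous and quasiconvex, each set $\mathcal{K}_t = \{w : f(w)\leq t\}$ is closed and convex. The right-continuity of $t\mapsto \pr\{f(Y)\leq t\}$ shows that the infimum defining $m^-_Y$ is attained, so $\pr\{f(Y)\leq m^-_Y\}\geq 1/2$. It also gives $\pr\{f(Y)\leq t\}\geq 1/2$ for every $t\geq m^-_Y$. Here $m^-_Y$ is finite because $f$ is real valued, so $\pr\{f(Y)\leq t\}\to 1$ as $t\to\infty$ and $\to 0$ as $t\to -\infty$.

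\emph{Case $a=m^-_Y$.} Take $t=m^-_Y$. Since $\pr(Y\in\mathcal{K}_{m^-_Y})\geq 1/2$, Corollary \ref{cor:compare} gives $\pr(X\in \mathcal{K}_{m^-_Y})\geq \pr(Y\in\mathcal{K}_{m^-_Y})$. Taking complements yields $\pr\{f(X)>m^-_Y\}\leq \pr\{f(Y)>m^-_Y\}$. If in addition $\pr\{f(X)=m^-_Y\}=0$, then
\[
\pr\{f(X)\geq m^-_Y\}=\pr\{f(X)>m^-_Y\}\leq \pr\{f(Y)>m^-_Y\}\leq \pr\{f(Y)\geq m^-_Y\}.
\]

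\emph{Case $a>m^-_Y$.} We need control of $\pr\{f(X)<a\}$, and the set $\{f<a\}$ need not be closed. We approximate it from inside. For any $t$ with $m^-_Y\leq t<a$, the set $\mathcal{K}_t$ is closed and convex with $\pr(Y\in\mathcal{K}_t)\geq 1/2$. Corollary \ref{cor:compare} then gives $\pr\{f(X)\leq t\}\geq \pr\{f(Y)\leq t\}$.

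Now let $t\uparrow a$ along a sequence $t_n\in[m^-_Y,a)$. The events $\{f\leq t_n\}$ increase to $\{f<a\}$, so by continuity of probability $\pr\{f(X)<a\}\geq \pr\{f(Y)<a\}$. Taking complements gives $\pr\{f(X)\geq a\}\leq \pr\{f(Y)\geq a\}$.

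The only delicate point is this limiting step. It is the reason the strict inequality $a>m^-_Y$ is required: it guarantees a nonempty interval $[m^-_Y,a)$ of admissible levels. At $a=m^-_Y$ this interval is empty, which is why the boundary case needs the separate treatment and the extra atom condition above.
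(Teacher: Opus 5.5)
Your proof is correct and follows the paper's argument. You apply Corollary~\ref{cor:compare} to the closed convex sublevel sets $\{f\le t\}$ for all $t\ge m^-_Y$, using right-continuity to include $t=m^-_Y$, and for $a>m^-_Y$ you pass to the limit $t\uparrow a$. The paper writes that limit as continuity from above on $\{f>b\}\downarrow\{f\ge a\}$, which is just the complemented form of your increasing union $\{f\le t_n\}\uparrow\{f<a\}$.
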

Corollary \ref{cor:median} proves the conjecture of \citet{ehm18} under weaker conditions than convex monotonicity. It also establishes the tail ordering proposed in the conjecture of \citet{coh22} under the stronger Loewner covariance condition. In the appendix, we present a counterexample showing that the original conjecture of \citet{coh22} under marginal variance dominance and the Sudakov–Fernique increment condition is false.

\section{Proofs of Theorem \ref{thm:conv} and Corollaries \ref{cor:compare}--\ref{cor:median}}\label{sec:proof}
\subsection{Lemmas for Theorem \ref{thm:conv}}
Before proving Theorem \ref{thm:conv}, we record two useful lemmas.
\begin{lemma}\label{lem:concave}
Let $\mathcal{K}$ be a closed convex set that is a proper subset of $\mathbb{R}^d$ and has a nonempty interior. Let $X\sim N_d(0, \Sigma_X)$ with $\Sigma_X\succ 0$, and define 
\begin{align} G(v) &= \pr(X\in \mathcal{K}+v). \label{eq:G}\end{align}Define $H(v) = \Phi^{-1}\{G(v)\}$, where $\Phi$ and $\Phi^{-1}$ are the cumulative distribution function and quantile function respectively of the standard univariate normal. Then, for any $\lambda \in [0,1]$ and any $s,u \in \mathbb{R}^d$, 
\begin{align}\label{eq:concave}
H \left\{\lambda s + (1-\lambda)u\right\}  &\geq \lambda H(s) + (1-\lambda)H(u).
\end{align}
That is, $H(v)$ is concave in $v$.

\end{lemma}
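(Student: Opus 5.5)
The plan is to apply the Ehrhard--Borell inequality to the Gaussian measure $\gamma = N_d(0,\Sigma_X)$. It states that for Borel (in Borell's form) or closed convex (in Ehrhard's original form) sets $A,B$ and $\lambda\in[0,1]$,
\begin{align*}
\Phi^{-1}\{\gamma(\lambda A + (1-\lambda)B)\} \geq \lambda \Phi^{-1}\{\gamma(A)\} + (1-\lambda)\Phi^{-1}\{\gamma(B)\}.
\end{align*}
It is stated for the standard Gaussian measure. The general nondegenerate case follows by the linear change of variables $X = \Sigma_X^{1/2}Z$, under which $\pr(X\in A) = \pr(Z \in \Sigma_X^{-1/2}A)$. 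Since $\Sigma_X^{-1/2}$ is linear, it commutes with Minkowski combinations and preserves closedness and convexity.

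Fix $s,u$ and $\lambda\in[0,1]$, and set $A = \mathcal{K}+s$ and $B=\mathcal{K}+u$. The key set identity is that, for convex $\mathcal{K}$,
\begin{align*}
\lambda(\mathcal{K}+s) + (1-\lambda)(\mathcal{K}+u) = \lambda\mathcal{K} + (1-\lambda)\mathcal{K} + \lambda s + (1-\lambda)u = \mathcal{K} + \lambda s + (1-\lambda) u .
\end{align*}
Here $\lambda\mathcal{K}+(1-\lambda)\mathcal{K}=\mathcal{K}$: one inclusion is convexity, and the other follows by writing $k=\lambda k+(1-\lambda)k$. Substituting this identity into Ehrhard--Borell gives exactly \eqref{eq:concave}, since $G(\lambda s+(1-\lambda)u) = \gamma(\mathcal{K}+\lambda s+(1-\lambda)u)$.

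The remaining care concerns where $H$ takes finite values, which is what the hypotheses are for. Because $\mathcal{K}$ has nonempty interior, every translate $\mathcal{K}+v$ contains an open ball, so $G(v)>0$ since $\Sigma_X\succ 0$ gives a positive density everywhere. Because $\mathcal{K}$ is a closed proper subset, its complement is a nonempty open set, so $G(v)<1$. Hence $H(v)\in\mathbb{R}$ for all $v$, and no conventions about $\pm\infty$ in the inequality are needed. I expect the only real obstacle to be invoking the correct version of Ehrhard--Borell. Borell's extension covers all Borel sets, so the convexity of the translates is not even required for the inequality itself, only for the Minkowski identity above. Measurability of $\lambda A+(1-\lambda)B$ is automatic here because it equals a closed set.
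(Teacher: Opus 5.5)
Your proposal is correct and follows the paper's proof essentially step for step: whiten via $\Sigma_X^{-1/2}$, use convexity to write $\mathcal{K}+\lambda s+(1-\lambda)u$ as the Minkowski combination $\lambda(\mathcal{K}+s)+(1-\lambda)(\mathcal{K}+u)$, apply Ehrhard--Borell, and use properness and nonempty interior to get $0<G(v)<1$, so that $H$ is real-valued. Your justification of $0<G(v)<1$ is a little more explicit than the paper's (an open ball inside each translate, a nonempty open complement, and an everywhere-positive Gaussian density), as is your remark on measurability, but the argument is the same.
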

\begin{proof}
As $\Sigma_X \succ 0$, it has a symmetric invertible square root $\Sigma_X = \Sigma_X^{1/2}\Sigma_X^{1/2}$. Then, 
\begin{align*} G(v) = \gamma_d\{\Sigma_X^{-1/2}\left(\mathcal{K} + v\right)\},\end{align*} where $\gamma_d$ is the $d$-dimensional Gaussian measure with mean zero and identity covariance matrix. 

As $\mathcal{K}$ is convex, we have for any $s, u \in \mathbb{R}^d$ and for $\lambda \in [0,1]$ that \begin{align*} \Sigma_X^{-1/2}\left\{\mathcal{K} + \lambda s + (1-\lambda)u\right\} &= \Sigma_X^{-1/2}\left\{\lambda(\mathcal{K} + s) + (1-\lambda)(\mathcal{K} + u)\right\}\\ &= \lambda \Sigma_X^{-1/2}(\mathcal{K}+s) + (1-\lambda)\Sigma_X^{-1/2}(\mathcal{K}+u).\end{align*} Applying the Ehrhard--Borell inequality, we have
\begin{align*}
\Phi^{-1}\left\{G(\lambda s + (1-\lambda)u)\right\} &= \Phi^{-1}\gamma_d\left[\lambda\left\{\Sigma_X^{-1/2}(\mathcal{K} + s)\right\} + (1-\lambda)\left\{\Sigma_X^{-1/2}(\mathcal{K} + u)\right\}\right]\\
& \geq \lambda\Phi^{-1}\left[\gamma_d\left\{\Sigma_X^{-1/2}(\mathcal{K} + s)\right\}\right] + (1-\lambda)\Phi^{-1}\left[\gamma_d\left\{\Sigma_X^{-1/2}(\mathcal{K} + u)\right\}\right]\\
&= \lambda \Phi^{-1}\left\{G(s)\right\} + (1-\lambda)\Phi^{-1}\left\{G(u)\right\}.
\end{align*}
As $\mathcal{K}$ is a proper subset of $\mathbb{R}^d$ with a nonempty interior, $0 < G(v)<1$, and $H(v)=\Phi^{-1}\left\{G(v)\right\} \in \mathbb{R}$ is concave in $v$ as desired.
\end{proof}

\begin{lemma}\label{lem:comparePhi} Let $\Phi$ be the standard normal cumulative distribution function. Then, for $a,b,c\in \mathbb{R}$ such that $a+b \leq 2c$, $c\geq 0$,
\begin{align*}
\Phi(a) + \Phi(b)\leq 2\Phi(c).
\end{align*}
\end{lemma}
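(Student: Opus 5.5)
We want $\Phi(a)+\Phi(b)\le 2\Phi(c)$ whenever $a+b\le 2c$ and $c\ge 0$. Since $\Phi$ is increasing, we first reduce to the boundary case $a+b=2c$. If $a+b<2c$, replace $b$ by $b' = 2c-a > b$; then $\Phi(b)\le\Phi(b')$, so it suffices to show $\Phi(a)+\Phi(2c-a)\le 2\Phi(c)$ for all $a\in\mathbb{R}$. Write $a=c-t$ and $b=c+t$ with $t\ge 0$ (by symmetry in $a,b$). Then the claim becomes $\psi(t)\le \psi(0)$ for all $t\ge 0$, where
\begin{align*}
\psi(t)=\Phi(c-t)+\Phi(c+t).
\end{align*}

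We argue by monotonicity of $\psi$ in $t$. Its derivative is
\begin{align*}
\psi'(t) = \phi(c+t)-\phi(c-t),
\end{align*}
where $\phi$ is the standard normal density. Since $\phi$ is symmetric and decreasing in $|x|$, the sign of $\psi'(t)$ is determined by comparing $|c+t|$ with $|c-t|$. For $c\ge 0$ and $t\ge 0$ we have $|c+t| = c+t \ge |c-t|$. Hence $\phi(c+t)\le\phi(c-t)$, so $\psi'(t)\le 0$, and $\psi$ is nonincreasing on $[0,\infty)$. Therefore $\psi(t)\le\psi(0)=2\Phi(c)$, which is the claim.

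Equivalently, one can view this as a concavity argument: $\Phi$ is concave on $[0,\infty)$, and the midpoint comparison extends to pairs $c\pm t$ where $c-t<0$ because of the symmetry of $\phi$. The direct derivative computation avoids having to split into cases according to the sign of $c-t$, so that is the route we take.

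There is no substantial obstacle. The only point that needs care is the initial reduction from the inequality $a+b\le 2c$ to the equality case, which relies on the monotonicity of $\Phi$, together with the observation that $c\ge 0$ is exactly what makes $|c+t|\ge|c-t|$. That the hypothesis $c\ge0$ is needed is clear from the counterexample $c<0$, $t$ large, where $\psi(t)\to 1>2\Phi(c)$.
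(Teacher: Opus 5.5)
Your proof is correct and follows essentially the same route as the paper: reduce to the boundary case $b = 2c-a$ by monotonicity of $\Phi$, then use the derivative and the symmetry and unimodality of $\phi$ to show that $\Phi(a)+\Phi(2c-a)$ is maximized at $a=c$. Your $\psi(t)$ with $a=c-t$ is just a reparametrization of the paper's $q(a)$, and it covers $c=0$ without the separate case the paper treats.
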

\begin{proof} Since $b \leq 2c-a$, by monotonicity of $\Phi$ it suffices to prove the result at $b=2c-a$. At $c=0$, $\Phi(a)+\Phi(-a) = 1 = 2\Phi(0)$. For $c > 0$, define the function $q(a) = \Phi(a) + \Phi(2c-a)$. Differentiating, we have 
\begin{align*}
q'(a) &= \phi(a)-\phi(2c-a),
\end{align*}where $\phi$ is the standard normal density. For $a \leq c$, we have $|a| \leq |2c-a|$, while for $a \geq c$ we have $|a|\geq |2c-a|$. As $\phi$ is symmetric and unimodal at 0, $q'(a) \geq 0$ for $a \leq c$, and $q'(a) \leq 0$ for $a \geq c$. The function $q$ is thus maximized at $a=c$, so that \begin{align*} \Phi(a)+\Phi(b)\leq q(a)\leq q(c) = 2\Phi(c).\end{align*}\end{proof}
\subsection{Proof of Theorem \ref{thm:conv} and relation to Anderson's Theorem}
\begin{proof}
The inequality is trivial when $\mathcal{K} = \mathbb{R}^d$, so suppose that $\mathcal{K}$ is a proper subset of $\mathbb{R}^d$. As $X+W$ is absolutely continuous and $\mathcal{K}$ is convex, $\pr(X+W\in \mathcal{K}) \geq 1/2$ further implies that $\mathcal{K}$ has a nonempty interior. 

For $v\in \mathbb{R}^d$ let $G(v) = \pr(X\in \mathcal{K}+v)$ as in (\ref{eq:G}) and $H(v) = \Phi^{-1}\{G(v)\}$. We first establish that $\pr(X+W \in \mathcal{K})\geq 1/2$ forces $H(0)\geq 0$. Suppose instead that $H(0) < 0$. Applying Lemma \ref{lem:concave} and setting $\lambda=1/2$, $s=v$, and $u=-v$ in (\ref{eq:concave}) for any fixed $v\in \mathbb{R}^d$, this would imply
\begin{align*}
\frac{1}{2}H(v) + \frac{1}{2}H(-v) & \leq H(0) < 0 \Rightarrow H(-v) < -H(v).
\end{align*}
As $\Phi$ is monotone increasing, this would further imply that for any $v\in\mathbb{R}^d$,
\begin{align*}
G(v) + G(-v) &= \Phi\{H(v)\} + \Phi\{H(-v)\}\\
& < \Phi\{H(v)\} + \Phi\{-H(v)\}\\& = \Phi\{H(v)\}+1-\Phi\{H(v)\} = 1.
\end{align*}Using independence of $X$ and $W$ and $W\overset{d}{=}-W$, this would then yield
\begin{align*}
    \pr(X+W\in \mathcal{K}) &= E\left\{\pr(X+W\in \mathcal{K}\mid W)\right\}\\
    &= E\{G(-W)\} = \frac{1}{2}E\{G(W)+G(-W)\} < 1/2,
\end{align*}
contradicting the assumption that $\pr(X+W\in \mathcal{K}) \geq 1/2$. Therefore $H(0)\geq 0$, or equivalently $G(0)\geq 1/2$.

Now define $a = H(v)$, $b=H(-v)$, and $c=H(0)$ for any fixed $v\in\mathbb{R}^d$. We have just established that $c\geq 0$. Again applying Lemma \ref{lem:concave}, we further have $a+b\leq 2c$. Therefore, the conditions of Lemma \ref{lem:comparePhi} hold, so that 
\begin{align*}
\Phi\{H(v)\} + \Phi\{H(-v)\} & \leq 2 \Phi\{H(0)\}.
\end{align*}
Recalling $H(v) = \Phi^{-1}\{G(v)\}$, this implies
\begin{align*}
\frac{1}{2}G(v) + \frac{1}{2}G(-v) &\leq G(0).
\end{align*} Using that $X$ and $W$ are independent and $W\overset{d}{=}-W$, we obtain
\begin{align*}
    \pr(X+W\in \mathcal{K}) &= E\left\{\pr(X+W\in \mathcal{K} \mid W)\right\}\\
    &= E\{G(-W)\} = \frac{1}{2}E\{G(W)+G(-W)\}\\
    &\leq G(0) = \pr(X\in \mathcal{K})
\end{align*}
as desired.
\end{proof}

\begin{remark} If $\mathcal{K}$ is further assumed mirror symmetric, i.e., $\mathcal{K} = -\mathcal{K}$, then $G(v) = G(-v)$ and $H(v) = H(-v)$. Therefore, $H(v) = (1/2)H(v) + (1/2)H(-v)$, and Lemma \ref{lem:concave} then implies 
\begin{align*}
H(v) &= \frac{1}{2}H(v)+\frac{1}{2}H(-v)\\ &\leq H(0),
\end{align*}
so that $G(v) \leq G(0)$ by monotonicity of $\Phi$. For $W$ independent of $X$ (but not necessarily centrally symmetric), conditioning on $W$ then recovers
\begin{align*}
\pr(X+W\in \mathcal{K}) &= E\{\pr(X+W\in \mathcal{K}\mid W)\}\\
&= E\{G(-W)\}\\
&\leq G(0) = \pr(X\in \mathcal{K}),
\end{align*}
the Gaussian convolution consequence of Anderson's Theorem. Theorem \ref{thm:conv} replaces the requirement that $\mathcal{K}$ is mirror symmetric with the conditions that $\pr(X+W\in \mathcal{K})\geq 1/2$ and that $W$ is centrally symmetric. 
\end{remark}

\subsection{Proof of Corollary \ref{cor:compare}.}
\begin{proof}
We break the proof into two cases, depending on whether or not $\Sigma_Y$ is positive definite.

\medskip
\noindent\textit{Case 1: $\Sigma_Y$ is positive definite.}

Let $W \sim \mathcal{N}_d(0, \Sigma_Y-\Sigma_X)$ with $W$ independent of $X$. Observe that $Y \overset{d}{=}X+W$ and that $W\overset{d}{=} -W$.
Suppose first that $\text{rank}(\Sigma_X) = d$. Then, Theorem \ref{thm:conv} implies that if $\pr(Y\in \mathcal{K}) = \pr(X+W \in \mathcal{K})\geq 1/2$, 
\begin{align*} \pr(X\in \mathcal{K})&\geq \pr(X+W\in \mathcal{K})\\ &= \pr(Y\in \mathcal{K}).\end{align*}

Now suppose $\text{rank}(\Sigma_X) < d$. For $t \in [0,1]$, define 
\begin{align*} Z_t &= X + t^{1/2}W.
\end{align*} For all $t\in(0,1]$, $Z_t \sim \mathcal{N}_d(0,\Sigma_t)$ with 
\begin{align*} \Sigma_t &= \Sigma_X+t(\Sigma_Y-\Sigma_X) \succ 0;\\ \Sigma_Y-\Sigma_t &= (1-t)(\Sigma_Y-\Sigma_X) \succeq 0.\end{align*} For each $t\in (0,1]$, let $W_t$ be independent of $Z_t$ with 
\begin{align*}W_t \sim \mathcal{N}_d(0, \Sigma_Y-\Sigma_t),
\end{align*}so that $Z_t+W_t \overset{d}{=} Y$ and $W_t \overset{d}{=} -W_t$. The preceding full-rank argument then establishes that for $t \in (0,1]$, whenever $\pr(Y\in \mathcal{K})\geq 1/2$,\begin{align*}\pr(Z_t \in \mathcal{K}) \geq \pr(Y\in\mathcal{K}).\end{align*}Observe that $||X - Z_t||_2^2 = t||W||_2^2$. As $E||W||^2_2 < \infty$, we have that $Z_t$ converges in $L^2$, and hence in distribution, to $X$ as $t\downarrow 0$. As $\mathcal{K}$ is closed, the Portmanteau theorem implies 
\begin{align*}\operatorname{pr}(X\in \mathcal{K}) \geq \limsup_{t\downarrow 0}\operatorname{pr}(Z_t \in \mathcal{K})\geq \operatorname{pr}(Y\in \mathcal{K})\end{align*} as desired.

\medskip
\noindent\textit{Case 2: $\Sigma_Y$ is singular.}

Let $r := \text{rank}(\Sigma_Y)$. If $r=0$ the result is immediate, so suppose $r\geq 1$. Observe that as $0\preceq\Sigma_X\preceq\Sigma_Y$, any $z$ in the null space of $\Sigma_Y$ must be in the null space of $\Sigma_X$, as
\begin{align*} 0\leq z^\T\Sigma_X z\leq z^\T\Sigma_Y z = 0.\end{align*} As $\Sigma_X\succeq 0$ , this implies $\Sigma_Xz = 0$. Thus, by symmetry of $\Sigma_X$ and $\Sigma_Y$, 
\begin{align*}\text{colspace}(\Sigma_X)\subseteq \text{colspace}(\Sigma_Y).\end{align*} Eigendecompose $\Sigma_Y$ as $\tilde{Q}\tilde{\Lambda} \tilde{Q}^\T$. $\tilde{\Lambda}$ is diagonal, with $r$ positive entries and $d-r$ entries equal to zero. Partition $\tilde{Q}=[U,U_0]$ for the eigenvectors corresponding to the $r$ positive and $d-r$ null eigenvalues respectively. Define 
\begin{align*} \tilde{X} = U^\T X \in \mathbb{R}^r;\\\tilde{Y} = U^\T Y \in \mathbb{R}^r,\end{align*} and observe that 
\begin{align*} \Sigma_{\tilde{Y}}\succ 0;\;\;\;\Sigma_{\tilde{Y}} - \Sigma_{\tilde{X}} = U^\T(\Sigma_Y-\Sigma_X)U \succeq 0.\end{align*}Define $\tilde{\mathcal{K}} = \{q\in\mathbb{R}^r: U q \in \mathcal{K}\}$.  Then, since $X, Y \in \text{colspace}(\Sigma_Y)$ a.s., 
\begin{align*}\operatorname{pr}(\tilde{X}\in \tilde{\mathcal{K}}) &= \operatorname{pr}(X\in \mathcal{K});\\ \operatorname{pr}(\tilde{Y}\in \tilde{\mathcal{K}}) &= \operatorname{pr}(Y\in \mathcal{K})\geq 1/2.\end{align*} 
Since $\mathcal{K}$ is closed and convex and $q\mapsto Uq$ is linear, $\tilde{\mathcal{K}}$ is also closed and convex. The proof then follows by applying the Case 1 argument to $\tilde{X}$, $\tilde{Y}$, and $\tilde{\mathcal{K}}$. 
\end{proof}
\subsection{Proof of Corollary \ref{cor:median}}
\begin{proof}
Recall that $m^-_Y = \inf\{t: \pr\{f(Y)\leq t\} \geq 1/2\}$. For any $a \geq m_Y^{-}$, we have 
\begin{align*}\pr\{f(Y)\leq a\} \geq 1/2.\end{align*} The case $a > m^-_Y$ follows by definition of $m^-_Y$, while the case $a=m^-_Y$ follows by continuity from above of probability measures. 

As $f$ is quasiconvex and lower semicontinuous by assumption, the sublevel set \begin{align*}\mathcal{K}_a = \{w: f(w)\leq a\}\end{align*} is closed (by lower semicontinuity) and convex (by quasiconvexity), and for $a\geq m^-_Y$ we have established
\begin{align*}
\pr(Y\in \mathcal{K}_a) = \pr\left\{f(Y)\leq a\right\} \geq 1/2.
\end{align*}Applying Corollary \ref{cor:compare}, we have for every $a\geq m^-_Y$
\begin{align*}
\pr\{f(X)\leq a\}\geq \pr\{f(Y)\leq a\},
\end{align*}
or equivalently
\begin{align} \label{eq:strict}
\pr\{f(X) > a\} \leq \pr\{f(Y) > a\}.
\end{align}To obtain the comparison with non-strict inequalities for $a > m^-_Y$, apply (\ref{eq:strict}) for $b$ satisfying $m^-_Y\leq b < a$ and send $b\uparrow a$. As $\{f > b\}\downarrow \{f\geq a\}$ for $b\uparrow a$, continuity from above of probability measures gives
\begin{align*}
\pr\{f(X) \geq a\} \leq \pr\{f(Y) \geq a\}.
\end{align*}For $a = m^-_Y$, (\ref{eq:strict}) establishes $\pr\{f(X) > m^-_Y\} \leq \pr\{f(Y) > m^-_Y\}$. If $\pr\{f(X) = m^-_Y\}=0$, then
\begin{align*}
\pr\{f(X) \geq m^-_Y\} = \pr\{f(X) > m^-_Y\} \leq \pr\{f(Y) > m^-_Y\} \leq \pr\{f(Y) \geq m^-_Y\}.
\end{align*}
\end{proof}

\section{Illustration and conservative statistical inference}
\subsection{An illustration with max statistics}\label{sec:illustration}

We compare the upper tail probabilities of the maximum absolute value and the maximum, respectively, for two centered Gaussian vectors in dimension two. Although Gaussian maxima are covered by several classical comparison inequalities, none provides the desired upper tail ordering of Corollary \ref{cor:median} under Loewner covariance dominance alone: Sudakov–Fernique compares expected maxima rather than tail probabilities, while Slepian’s inequality requires equal marginal variances. Let $Y$ and $X$ be centered Gaussian vectors with covariance matrices
\begin{align*}
\Sigma_Y = \begin{pmatrix}
   0.06  & 0.05 \\
    0.05  & 1.20
\end{pmatrix};\;\;
\Sigma_X =\begin{pmatrix}
   0.01  & -0.05 \\
    -0.05  & 1.00
  \end{pmatrix};\;\;
 \Sigma_Y-\Sigma_X &= 0.05\begin{pmatrix}1\\2\end{pmatrix}\begin{pmatrix}1 & 2\end{pmatrix}\succeq 0.\end{align*}

Figure \ref{fig:max} plots the upper tail probabilities corresponding to $X$ (dotted) and $Y$ (solid) after the transformations $f(w) = \max(|w_1|, |w_2|)$ (left) and $g(w)= \max(w_1,w_2)$ (right). As $f(w)$ is quasiconvex, continuous, and mirror symmetric, its upper tail probabilities reflect Anderson's theorem: $f(Y)$ stochastically dominates $f(X)$. As $g(w)$ is quasiconvex and continuous but is not mirror symmetric, stochastic dominance no longer holds in the plot on the right. For example,  $\pr\{g(X)\geq 0\} = 0.83 > \pr\{g(Y)\geq 0\} = 0.72$. The upper tail probabilities cross at $a=0.08$, with $\pr\{g(X)\geq a\} \leq \pr\{g(Y)\geq a\}$ for all $a>0.08$. The median of $g(Y)$ is $m^-_Y = \inf\{t:\pr\{g(Y)\leq t\} \geq 1/2\} = 0.23 > 0.08$, reflecting Corollary \ref{cor:median}'s guarantee that the upper tail probabilities of $g(X)$ are bounded by those of $g(Y)$ above the median of $g(Y)$.

\begin{figure}[h]
\begin{center}
\includegraphics[scale=.8]{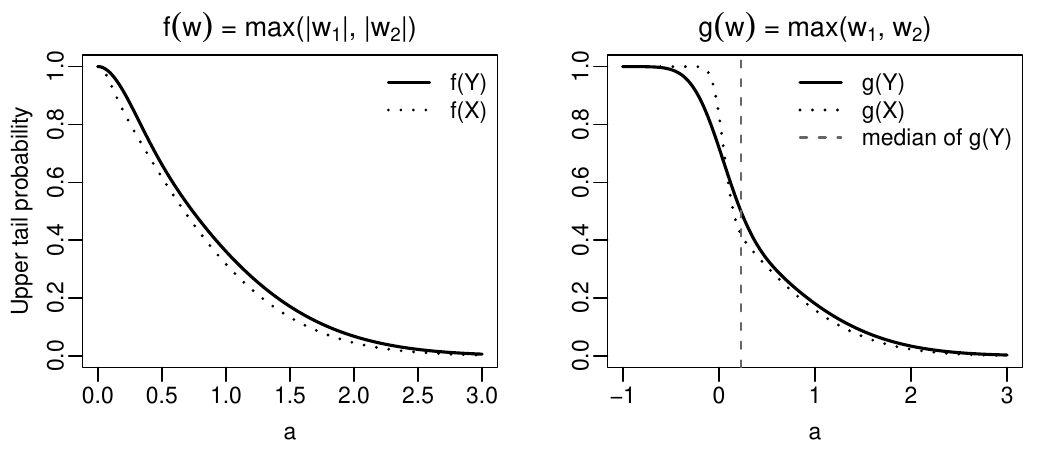}
\caption{Upper tail probabilities for $f(w) = \max(|w_1|, |w_2|)$ (left) and $g(w) = \max(w_1,w_2)$ (right) when applied to $Y$ (solid) and $X$ (dotted), where $Y$ and $X$ are centered multivariate normals with $\Sigma_Y\succeq \Sigma_X$. The plot on the left reflects Anderson's Theorem \citep{and55}, while the plot on the right illustrates Corollary \ref{cor:median}.}
\label{fig:max}
\end{center}
\end{figure}

In the appendix, we demonstrate that changing $E(Y_1^2)
$ from 0.06 to 0.01 produces a setting where $Y$ has marginal variances no smaller than those of $X$ and the Sudakov-Fernique covariance conditions hold, but where $\Sigma_Y \nsucceq \Sigma_X$ and there exists an $a$ above the median of $g(Y)$ such that $\pr\{g(X) \geq a\} > \pr\{g(Y)\geq a\}$. This disproves the original conjecture of \citet[\S 9.1]{coh22} stated under these weaker conditions and highlights the importance of both the condition $\Sigma_Y\succeq \Sigma_X$ in Corollary \ref{cor:compare} and the equal marginal variance assumption in Slepian's inequality.

\subsection{Conservative one-sided statistical inference for $\alpha < 1/2$}
In statistical applications, test statistics are often of the form \begin{align*} T_n = f_{\hat{\xi}}\{n^{1/2}(\hat{\theta}-\theta_0)\},\end{align*} where $\hat{\xi}$ is a nuisance parameter with probability limit $\xi$. For instance, the studentized max statistic $f_{\hat{\Sigma}_Y}\{n^{1/2}(\hat{\theta}-\theta_0)\} = \max_{1\leq i\leq d} n^{1/2}(\hat{\theta}_i-\theta_{0i})/(\hat{\Sigma}_Y)_{ii}^{1/2}$ is often preferred over the nonstudentized variant because it is better balanced in the sense of \citet{ber88balanced}. Furthermore, relevant tail probabilities are calculated using the estimated pushforward measure from a centered multivariate normal with covariance $\hat{\Sigma}_Y$. The following proposition justifies conservative inference using conservative covariance estimators for $\alpha < 1/2$, accommodating both nuisance parameter estimation and plug-in estimation of the upper tail probabilities.

\begin{proposition}\label{prop:inference}
Let $f_u:\mathbb{R}^d\to \mathbb{R}$ be quasiconvex with $f_u(w)$ jointly continuous in $(u,w)$. Let $S_u(t;\Sigma) = \pr\{f_u(W)\geq t\}$ where $W\sim\mathcal{N}_d(0, \Sigma)$. Suppose that $n^{1/2}(\hat{\theta}-\theta_0)$ converges in distribution to $X\sim \mathcal{N}_d(0, \Sigma_X)$, that $\hat{\Sigma}_Y$ is positive semidefinite and converges in probability to $\Sigma_Y \succeq \Sigma_X$, and that $\hat{\xi}$ converges in probability to $\xi$. Assume that $S_{\xi}(t; \Sigma_Y)$ is continuous in $t$ on $\{t: S_{\xi}(t;\Sigma_Y) < 1/2\}$ and that $\lim_{h\rightarrow 0^+}S_{\xi}(m^+_{Y}+h;\Sigma_Y) = 1/2$, where $m^+_{Y} = \sup\{t:S_\xi(t;\Sigma_Y) \geq 1/2\}.$ Let $T_n = f_{\hat{\xi}}\left\{n^{1/2}(\hat{\theta}-\theta_0)\right\}$. Then, for $\alpha < 1/2$, the upper tail probability $S_{\hat{\xi}}(T_n; \hat{\Sigma}_Y)$ is an asymptotically valid $p$-value:
\begin{align*}
\underset{n\rightarrow\infty}\limsup\;\pr\{S_{\hat{\xi}}(T_n; \hat{\Sigma}_Y)\leq \alpha\}\leq \alpha.
\end{align*}
\end{proposition}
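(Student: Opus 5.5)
The plan is to fix $\alpha\in(0,1/2)$ (the case $\alpha\le 0$ is trivial). I will trap the rejection event $\{S_{\hat{\xi}}(T_n;\hat{\Sigma}_Y)\leq\alpha\}$ inside an event of the form $\{T_n\geq t_1\}$ that holds with probability tending to one. Here $t_1$ is a fixed threshold strictly above $m^+_Y$ with $S_\xi(t_1;\Sigma_Y)$ slightly larger than $\alpha$. The limit of $\pr(T_n\geq t_1)$ will then be compared with $S_\xi(t_1;\Sigma_Y)$ via Corollary \ref{cor:median}.

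\textbf{Step 1: choosing $t_1$.} The map $t\mapsto S_\xi(t;\Sigma_Y)$ is nonincreasing. On $(m^+_Y,\infty)$ it is strictly below $1/2$, hence continuous there by assumption. Its right limit at $m^+_Y$ equals $1/2$, and it tends to $0$ as $t\to\infty$. Given $\eta>0$ small enough that $\alpha+\eta<1/2$, the intermediate value theorem gives $t_1>m^+_Y$ with $S_\xi(t_1;\Sigma_Y)\in(\alpha,\alpha+\eta]$. Since $m^+_Y\geq m^-_Y$ (any $t<m^-_Y$ has $\pr\{f_\xi(Y)\ge t\}\ge\pr\{f_\xi(Y)>t\}>1/2$), we get $t_1>m^-_Y$. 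Also $f_\xi$ is continuous, hence lower semicontinuous, and quasiconvex.

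\textbf{Step 2: plug-in consistency at the fixed point $t_1$.} I will show $S_{\hat{\xi}}(t_1;\hat{\Sigma}_Y)\to S_\xi(t_1;\Sigma_Y)$ in probability. By the subsequence characterization, it suffices to treat deterministic sequences $u_n\to\xi$ and $\Sigma_n\to\Sigma_Y$ with $\Sigma_n\succeq 0$. Write $W_n=\Sigma_n^{1/2}Z$ and $W=\Sigma_Y^{1/2}Z$ with $Z\sim\mathcal{N}_d(0,I)$. Continuity of the square root and joint continuity of $(u,w)\mapsto f_u(w)$ give $f_{u_n}(W_n)\to f_\xi(W)$ almost surely. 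Moreover $t_1$ is not an atom of $f_\xi(W)$, because $S_\xi(\cdot;\Sigma_Y)$ is continuous at $t_1$. Hence $\pr\{f_{u_n}(W_n)\geq t_1\}\to S_\xi(t_1;\Sigma_Y)$. I expect this step to be the main technical obstacle, since it is where joint continuity and the no-atom condition are genuinely used. Note that no uniformity in $t$ is needed, thanks to the monotonicity argument in Step 3.

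\textbf{Step 3: the trapping argument.} Because $t\mapsto S_{\hat{\xi}}(t;\hat{\Sigma}_Y)$ is nonincreasing, on the event $\{T_n<t_1\}$ we have $S_{\hat{\xi}}(T_n;\hat{\Sigma}_Y)\geq S_{\hat{\xi}}(t_1;\hat{\Sigma}_Y)$. By Step 2 this exceeds $\alpha$ with probability tending to one, since the limit $S_\xi(t_1;\Sigma_Y)>\alpha$. Therefore
\begin{align*}
\limsup_{n\to\infty}\pr\{S_{\hat{\xi}}(T_n;\hat{\Sigma}_Y)\leq\alpha\}\leq\limsup_{n\to\infty}\pr(T_n\geq t_1).
\end{align*}
By Slutsky's lemma and the continuous mapping theorem applied to $(\hat{\xi},n^{1/2}(\hat{\theta}-\theta_0))\mapsto f_{\hat{\xi}}\{n^{1/2}(\hat{\theta}-\theta_0)\}$, we have $T_n\to f_\xi(X)$ in distribution. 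The Portmanteau theorem for the closed set $[t_1,\infty)$ then bounds the right side by $\pr\{f_\xi(X)\geq t_1\}$.

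\textbf{Step 4: conclusion.} Since $t_1>m^-_Y$, Corollary \ref{cor:median} yields $\pr\{f_\xi(X)\geq t_1\}\leq S_\xi(t_1;\Sigma_Y)\leq\alpha+\eta$. Letting $\eta\downarrow 0$ gives the stated bound.
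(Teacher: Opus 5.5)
Your proposal is correct and follows the paper's proof essentially step for step. You choose a continuity point $t_1$ (the paper's $c$) above $m^+_Y\geq m^-_Y$ with $S_\xi(t_1;\Sigma_Y)$ slightly above $\alpha$. You use monotonicity of the plug-in tail function and its consistency at that single point to trap the rejection event in $\{T_n\geq t_1\}$, then finish with Portmanteau and Corollary~\ref{cor:median}, exactly as the paper does. The only difference is in Step 2: you get consistency at $t_1$ directly from the almost-sure coupling $\Sigma_n^{1/2}Z\to\Sigma_Y^{1/2}Z$ plus the absence of an atom at $t_1$, whereas the paper first proves weak convergence in probability of the whole conditional law via bounded continuous test functions.
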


The proof is presented in the appendix, and follows from standard weak convergence and continuous mapping arguments. If $f_\xi(X)$ and $f_\xi(Y)$ are both continuously distributed, the conclusion extends to all $\alpha\leq 1/2$ by a simpler continuous mapping argument than that used to prove Proposition \ref{prop:inference}. That said, statistics producing discontinuous reference upper tail probabilities $S_\xi(t;\Sigma_Y)$ are not uncommon in multivariate one-sided and order-restricted inference. For example, chi-bar-square laws may contain atoms at the lower endpoint 0, attained whenever the projection onto the cone of interest equals the origin. Proposition \ref{prop:inference} accommodates atoms below the relevant upper tail region, including the atoms at zero when present in chi-bar-square laws.

\section*{Acknowledgements}
The author would like to thank the Associate Editor and the Editor for their constructive comments, which substantially improved the quality of the paper. The author is especially grateful to the Associate Editor for suggesting the stronger formulation in Theorem \ref{thm:conv} and its proof. Theorem \ref{thm:conv} is more general than the author's original result (now stated as Corollary \ref{cor:compare}) and provides a simpler argument for that result than the author's original proof.

\medskip

\noindent The author was supported by NSF Grant DMS-2413484.

\section*{Generative AI and AI-assisted technologies}

During the preparation of this work, the author used OpenAI's ChatGPT to help formulate the numerical example in \S \ref{sec:illustration} and the counterexample to the conjecture of \citet[\S 9.1]{coh22} in the appendix. It was also used to check algebraic and proof details. After using this tool, the author reviewed and edited the content as necessary and takes full responsibility for the content of the publication.

\appendix
\section{Counterexample under marginal variance dominance and Sudakov-Fernique conditions}
We now present a counterexample disproving the conjecture of \citet[\S 9.1]{coh22} which replaced the condition $\Sigma_Y\succeq \Sigma_X$ with a combination of weakly larger marginal variances and the Sudakov-Fernique increment conditions. Let $Y$ and $X$ be centered Gaussians with covariance matrices
\begin{align*}
\Sigma_Y = \begin{pmatrix}
   0.01  & 0.05 \\
    0.05  & 1.20
\end{pmatrix};\;\;
\Sigma_X =\begin{pmatrix}
   0.01  & -0.05 \\
    -0.05  & 1.00
  \end{pmatrix}.\end{align*}
The eigenvalues of $\Sigma_Y-\Sigma_X$ are 0.24 and -0.04, so the difference is indefinite and the requirement in Corollary 2.2 and Corollary 2.3 that $\Sigma_Y\succeq \Sigma_X$ does not hold. That said, we have $E(Y_1^2)-E(X_1^2)=0$, $E(Y_{2}^2)-E(X_2^2) = 0.2$, and $E\{(Y_1-Y_2)^2\} - E\{(X_1-X_2)^2\} = 0$. Therefore, the marginal variances of $Y$ are no smaller than those of $X$, and $Y$ satisfies equality in the Sudakov-Fernique conditions, so the conditions of the conjecture in \citet[\S 9.1]{coh22} hold. 
\begin{figure}[h]
\begin{center}
\includegraphics[scale=.8]{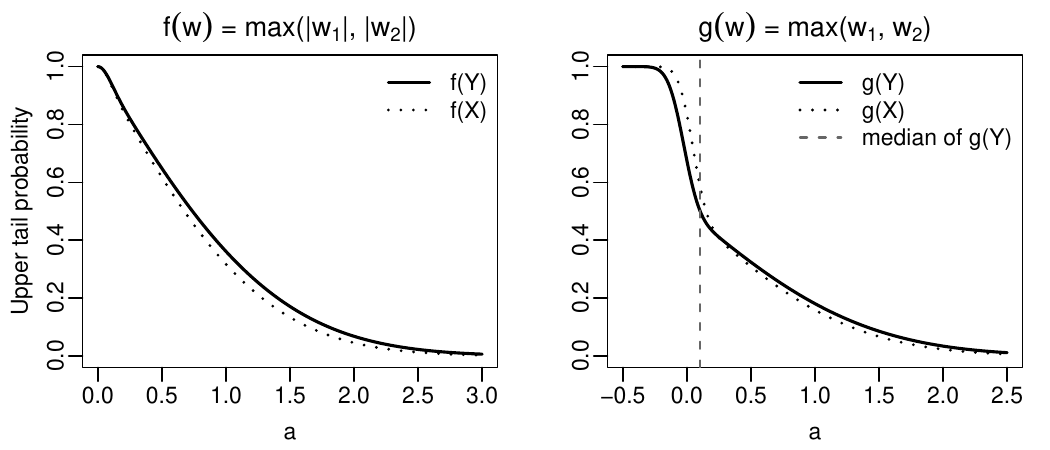}
\caption{Upper tail probabilities for $f(w) = \max\{|w_1|, |w_2|\}$ (left) and $g(w) = \max\{w_1,w_2\}$ (right) when applied to $Y$ (solid) and $X$ (dotted), where $Y$ and $X$ are centered multivariate normals such that $Y$ has weakly larger marginal variances than $X$, $Y$ and $X$ satisfy the covariance conditions of the Sudakov-Fernique inequality, but $\Sigma_Y\nsucceq \Sigma_X$.}
\label{fig:sudakov}
\end{center}
\end{figure}

Figure \ref{fig:sudakov} shows the upper tail probabilities for $f(w) = \max(|w_1|, |w_2|)$ (left) and $g(w) = \max(w_1,w_2)$ (right). The plot on the right shows that the upper tail probabilities cross above the median. Under the stronger condition $\Sigma_Y\succeq \Sigma_X$, Corollary 2.3 rules out any crossing of the upper tail curves above the median of $g(Y)$. The median of $g(Y)$ is $m^{-}_Y=0.10$, while the crossing occurs at 0.23, roughly the 60th percentile. At $a=0.11$, above the median of $g(Y)$, we have $\operatorname{pr}\{g(X)\geq 0.11\} = 0.57$, while $\operatorname{pr}\{g(Y)\geq 0.11\} = 0.49$.

\section{Proof of Proposition 4.1}
\begin{proof}
By the continuous mapping theorem, joint continuity of $f_u(w)$ in $(u, w)$ implies that if $\hat{\xi}$ converges in probability to $\xi$ and $n^{1/2}(\hat{\theta}-\theta_0)$ converges in distribution to $X \sim \mathcal{N}_d(0, \Sigma_X)$, then 
\begin{align*} T_n = f_{\hat{\xi}}\{n^{1/2}(\hat{\theta}-\theta_0)\} \overset{d}{\rightarrow} f_\xi(X).\end{align*} Given $\hat{\Sigma}_Y$ and $\hat{\xi}$, let 
\begin{align*} Y^*\mid \hat{\Sigma}_Y,\hat{\xi} \sim \mathcal{N}_{d}(0, \hat{\Sigma}_Y).\end{align*} We now show that the conditional distribution $f_{\hat{\xi}}(Y^*)\mid \hat{\Sigma}_Y, \hat{\xi}$ converges weakly in probability to $f_{\xi}(Y)$. It suffices to show that for all bounded continuous functions $\varphi$,
\begin{align*} E[\varphi\{f_{\hat{\xi}}(Y^*)\}\mid \hat{\Sigma}_Y, \hat{\xi}] &\overset{p}{\rightarrow} E[\varphi\{f_{{\xi}}(Y)\}],\end{align*} where $Y\sim \mathcal{N}_d(0,\Sigma_Y)$ and $\Sigma_Y\succeq \Sigma_X$. 

We proceed by first establishing continuity of the map 
\begin{align*} (u, V)\mapsto E[\varphi\{f_{u}(V^{1/2}Z)\}] \end{align*} for $Z\sim \mathcal{N}_d(0, I_d)$ and $V\succeq 0$. For any sequence $V_m\rightarrow V$ with $V_m\succeq 0$ for all $m$ and any fixed $z$, \begin{align*} V_m^{1/2}z\rightarrow V^{1/2} z\end{align*} by continuity of the symmetric square root. For any sequence $u_m\rightarrow u$, \begin{align*}f_{u_m}(V_m^{1/2}z)\rightarrow f_{u}(V^{1/2}z)\end{align*} by joint continuity of $f_u(w)$ in $(u, w)$. Therefore, by continuity of $\varphi$, $\varphi\{f_{u_m}(V_m^{1/2}z)\}$ converges for any fixed $z$ to $\varphi\{f_{u}(V^{1/2}z)\}$. Boundedness of $\varphi$ and dominated convergence then imply that \begin{align*} E[\varphi\{f_{u_m}(V_m^{1/2}Z)\}]\rightarrow E[\varphi\{f_{u}(V^{1/2}Z)\}]\end{align*} for $Z\sim \mathcal{N}_d(0, I_d)$, so the map $(u, V)\mapsto E[\varphi\{f_{u}(V^{1/2}Z)\}]$ is continuous in $(u, V)$. 

Let $Z\sim \mathcal{N}_d(0, I_d)$ be independent of $(\hat{\xi}, \hat{\Sigma}_Y)$. Since 
\begin{align*}(\hat{\xi}, \hat{\Sigma}_Y) \overset{p}{\rightarrow}(\xi, \Sigma_Y),\end{align*} the continuous mapping theorem gives 
\begin{align*}E[\varphi\{f_{\hat{\xi}}(Y^*)\}\mid \hat{\Sigma}_Y, \hat{\xi}] &= E[\varphi\{f_{\hat{\xi}}(\hat{\Sigma}_Y^{1/2}Z)\}\mid \hat{\Sigma}_Y, \hat{\xi}]\\ & \overset{p}{\rightarrow} E[\varphi\{f_{{\xi}}(\Sigma_Y^{1/2}Z)\}] = E[\varphi\{f_{{\xi}}(Y)\}]\end{align*} as desired. 

Recall that $S_u(t;\Sigma) = \pr\{f_u(W)\geq t\}$ where $W\sim\mathcal{N}_d(0, \Sigma)$. The upper tail probabilities of $f_{\hat{\xi}}(Y^*)\mid \hat{\Sigma}_Y, \hat{\xi}$ are thus $S_{\hat{\xi}}(t;\hat{\Sigma}_Y)$. By the established weak convergence in probability of the conditional law $f_{\hat{\xi}}(Y^*)\mid \hat{\Sigma}_Y, \hat{\xi}$ to $f_{\xi}(Y)$, 
\begin{align*} S_{\hat{\xi}}(t;\hat{\Sigma}_Y)\overset{p}{\rightarrow} S_{\xi}(t;\Sigma_Y)\end{align*} at all continuity points of $S_{\xi}(t;\Sigma_Y)$. Fix $\alpha < 1/2$ and choose $\epsilon > 0$ such that $\alpha+\epsilon < 1/2$. Recall that by assumption, $S_{\xi}(t; \Sigma_Y)$ is continuous in $t$ on $\{t: S_{\xi}(t;\Sigma_Y) < 1/2\}$ and that $\lim_{h\rightarrow 0^+}S_{\xi}(m_Y^+ +h;\Sigma_Y) = 1/2$, where $m_Y^+ =  \sup\{t:S_\xi(t;\Sigma_Y) \geq 1/2\}$. This implies that there exists a continuity point $c$ such that 
\begin{align*}\alpha < S_{\xi}(c;\Sigma_Y) < \alpha+\epsilon.\end{align*} Then, weak convergence in probability implies 
\begin{align*}S_{\hat{\xi}}(c;\hat{\Sigma}_Y) \overset{p}{\rightarrow} S_{\xi}(c;\Sigma_Y) > \alpha,\end{align*} and hence that 
\begin{align*}
\operatorname{pr}\{S_{\hat{\xi}}(T_n;\hat{\Sigma}_Y) \leq \alpha, T_n \leq c\} \rightarrow 0,
\end{align*}
so that $S_{\hat{\xi}}(T_n;\hat{\Sigma}_Y) \leq \alpha$ implies $T_n > c$ with probability tending to one. 

Since $T_n$ converges in distribution to $f_\xi(X)$ where $X\sim \mathcal{N}_d(0, \Sigma_X)$, and $S_{\xi}(c;\Sigma_Y) < 1/2$ implies $c$ is above $m^-_Y = \inf\{t: \operatorname{pr}\{f_\xi(Y)\leq t\} \geq 1/2\}$, Corollary 2.3 in the main text implies
\begin{align*}
    \limsup_{n\rightarrow\infty} \;\operatorname{pr}\left\{S_{\hat{\xi}}(T_n;\hat{\Sigma}_Y) \leq \alpha\right\} \leq \operatorname{pr}\{f_\xi(X) \geq c\} \leq \operatorname{pr}\{f_\xi(Y) \geq c\} = S_{\xi}(c;\Sigma_Y) < \alpha+\epsilon.
\end{align*}
Sending $\epsilon\rightarrow 0$ completes the proof.
\end{proof}

\bibliographystyle{apalike}
\bibliography{bibliography}
\end{document}